\documentclass[11pt, a4paper]{amsart}
\usepackage{fullpage}
\usepackage{amssymb}
\usepackage{commath}
\usepackage{mathrsfs}
\usepackage{empheq}
\usepackage{amsmath, amsthm}
\usepackage{multicol}
\usepackage{parskip}
\usepackage{graphicx}
\usepackage{realboxes}
\usepackage{comment}

\usepackage[colorlinks=true, allcolors=blue]{hyperref}

\usepackage{tikz,tikz-3dplot,tikz-cd,tkz-tab,tkz-euclide,pgf,pgfplots}
\usetikzlibrary{bending,quotes,angles,3d}
\usepackage{bbm}
\usepackage{url}

\newcommand{\ol}{\overline}
\newcommand{\ul}{\underline}
\renewcommand{\phi}{\varphi}
\newcommand{\llb}{\left\lbrace}
\newcommand{\rrb}{\right\rbrace}
\newcommand{\llv}{\left\lvert}
\newcommand{\rrv}{\right\rvert}
\newcommand{\1}{\mathbbm{1}}

\newcommand\fakeqed{\pushQED{\qed}\qedhere}

\DeclareMathOperator{\Tor}{Tor}
\DeclareMathOperator{\Ext}{Ext}
\DeclareMathOperator{\Hom}{Hom}
\DeclareMathOperator{\cupmod}{Cup}

\newtheorem{thm}{Theorem}[section]

\theoremstyle{plain}

\newtheorem{lem}[thm]{Lemma}

\newtheorem*{theorem*}{Theorem}
\newtheorem*{cor*}{Corollary}

\theoremstyle{definition}
\newtheorem{defn}[thm]{Definition}

\theoremstyle{remark}
\newtheorem{rem}[thm]{Remark}
\newtheorem{eg}[thm]{Example}

\makeatletter
\@namedef{subjclassname@1991}{\textup{2020} Mathematics Subject Classification}
\makeatother

\title{Cohomology of dilute Temperley--Lieb algebras}
\author{Andrew Fisher}
\address{(Andrew Fisher) School of Mechanical, Aerospace and Civil Engineering, University of Sheffield, Sir Frederick Mappin Building, Mappin Street, Sheffield, S1 4DT, UK}
\email{andrew.fisher@sheffield.ac.uk}
\author{Daniel Graves}
\address{(Daniel Graves) Lifelong Learning Centre, University of Leeds, Woodhouse, Leeds, LS2 9JT, UK}
\email{dan.graves92@gmail.com}
\date{}

\begin{document}

\keywords{Temperley--Lieb algebras, dilute Temperley--Lieb algebras, cohomology of algebras, diagram algebras}
\subjclass{16E40, 16E30}


\begin{abstract}
Dilute Temperley--Lieb algebras are variants of Temperley--Lieb algebras arising in statistical mechanics in the study of solvable lattice models. In this paper we prove that the (co)homology of dilute Temperley--Lieb algebras vanishes in all positive degrees.
\end{abstract}

\maketitle

\section{Introduction}

The dilute Temperley--Lieb algebras have their origins in statistical mechanics. They arise in the study of solvable lattice models, in particular, the dilute $A$-$D$-$E$ lattice models (see \cite{Nienhuis,Roche,WNS1,WNS2,WPSN,Grimm,BP,GN1,GN2} for instance). They are variants of the Temperley--Lieb algebras, first introduced in \cite{TL}. The representation theory of the dilute Temperley--Lieb algebras has been studied in \cite{BSA,B-fusion}. The authors suggest \cite{BSA} for a good introduction to and history of the dilute Temperley--Lieb algebras. 

Both Temperley--Lieb algebras and dilute Temperley--Lieb algebras are defined on bases of diagrams, a viewpoint originally due to Kauffman \cite{Kauffmann1}. For a positive integer $n$, a \emph{Temperley--Lieb $n$-diagram} is a graph on two identical columns of $n$ vertices placed side by side such that every vertex is connected to \emph{precisely} one other vertex by an edge, and the edges can be drawn without crossing within the rectangle formed by the vertices. We refer to such a graph as being \emph{planar}. \emph{Dilute Temperley--Lieb $n$-diagrams} are defined similarly except we now allow isolated vertices. 

If we now fix a commutative ring, $k$, and an element $\delta \in k$, the \emph{Temperley--Lieb algebra} $TL_n(\delta)$ is spanned linearly by the Temperley--Lieb $n$-diagrams. The product glues two diagrams together along a column of vertices and generates a factor of $\delta$ for every loop formed. Similarly, the \emph{dilute Temperley--Lieb algebra}, $dTL_n(\delta)$, is spanned $k$-linearly by dilute Temperley--Lieb $n$-diagrams. The product in the dilute Temperley--Lieb algebra is similar to the product in the Temperley--Lieb algebra, insofar as it is defined in terms of gluing two basis diagrams together. However, we now have extra conditions. Loosely speaking, if we glue two diagrams together and there is a \emph{floating edge} in the diagram, then the product is defined to be zero (see Definition \ref{dTL-alg-defn} below for the precise definition and examples).

The (co)homology of Temperley--Lieb algebras has been much studied in recent years. The vanishing of the (co)homology of the Temperley--Lieb algebra has been shown to vary depending on the invertibility of $\delta$ (see \cite[Theorem A]{BH1}, \cite[Theorem C]{Sroka} and \cite[Theorem 7.6]{Boyde}) and the parity of $n$ (see \cite[Theorem B]{BH1}, \cite[Theorem A]{Sroka} and \cite[Theorem 1.1]{Boyde}). It is known that the (co)homology of $TL_n(\delta)$ can be non-zero in higher degrees when $n$ is even \cite[Theorem C]{BH1}. More recently, it has been shown that the homology of even Temperley--Lieb algebras is highly non-trivial, being governed by the differential graded algebra of planar loops \cite{BBRWS,Boyde-planar}. 

We show that the (co)homology of the dilute Temperley--Lieb algebras demonstrates different behaviour to that of the Temperley--Lieb algebras. Intuitively, the reason for this is that the dilute Temperley--Lieb algebra contains more idempotents, arising from the isolated vertices which are permitted in the dilute Temperley--Lieb algebras but not in the Temperley--Lieb algebras. This is made precise in Section \ref{main-thm-sec}. Our main result is as follows.

\begin{thm}
\label{main-thm}
For any commutative ring $k$, any $\delta \in k$ and any $n\in \mathbb{N}$, the
(co)homology of $dTL_n(\delta)$ is concentrated in degree zero, where it is isomorphic to $k$.
\end{thm}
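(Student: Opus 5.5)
The plan is to compute $\Tor^{dTL_n(\delta)}_*(\1,\1)$ and $\Ext_{dTL_n(\delta)}^*(\1,\1)$, where $\1$ is the trivial module: $k$ with the identity diagram acting as $1$ and every other basis diagram acting as $0$. I will do this by building an explicit projective resolution of $\1$ out of idempotents coming from isolated vertices. Write $A=dTL_n(\delta)$. For $1\le i\le n$ let $e_i$ be the dilute diagram with vertex $i$ isolated in both columns and horizontal edges $j\to j$ for $j\neq i$.

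The first step is to check the basic properties of these idempotents directly from Definition~\ref{dTL-alg-defn}. Each $e_i$ is idempotent, and no loops or floating edges are formed in $e_i^2$, so no factor of $\delta$ appears. The $e_i$ pairwise commute. Hence for each $S\subseteq\{1,\dots,n\}$ the product $e_S=\prod_{i\in S}e_i$ is an idempotent, namely the diagram with the vertices in $S$ isolated on both sides. Every $e_S$ with $S\neq\emptyset$ lies in the augmentation ideal $I$ spanned by the non-identity diagrams.

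The second step is to show that $I=\sum_i e_iA$ as a right ideal, perhaps after passing to the opposite side. Take a non-identity diagram $d$. If $d$ has an isolated vertex $i$ in its left column, then $d=e_id$. Otherwise $d$ has a cup in its left column, at some adjacent pair $(i,i+1)$. I would then try to factor $d$ through $e_ie_{i+1}$. First write $d=g\,d'$, where $g$ has the cup $(i,i+1)$ on the left and isolated vertices $i,i+1$ on the right, and $d'$ is $d$ with that cup replaced by two isolated vertices. Then I would use the gluing rules to rewrite such $d$ in the form $e_S x$, possibly together with the left/right symmetry of $A$. This is the step I expect to be the main obstacle, since the floating-edge rule decides whether such factorizations survive. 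If cups cannot be absorbed by the $e_i$, my fallback is to enlarge the family with idempotents supported on cups, and then check that the enlarged family still commutes.

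Assuming $I=\sum_i e_iA$, the third step is homological algebra. Because the $e_i$ commute, $A$ decomposes as $A=\bigoplus_{\epsilon}f_\epsilon A$, where $f_\epsilon=\prod_i e_i^{\epsilon_i}(1-e_i)^{1-\epsilon_i}$. On each summand, the Koszul/\v{C}ech-type complex
\[
0\to e_{\{1,\dots,n\}}A\to\cdots\to\bigoplus_{|S|=1}e_SA\to A\to \1\to 0
\]
is a tensor product of the two-term complexes $e_iA\to A$. It is therefore exact, giving a resolution of $\1$ by the projective modules $e_SA$. Applying $\1\otimes_A-$ kills every term with $S\neq\emptyset$: indeed $1\otimes e_Sa=1\cdot e_S\otimes e_Sa=0$. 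What remains is $k$ in degree $0$. The $\Hom_A(-,\1)$ computation is similar, using $\Hom_A(e_SA,\1)\cong\1e_S=0$. This gives Theorem~\ref{main-thm}.
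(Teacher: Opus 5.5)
There is a genuine gap, and it starts with the multiplication rule.

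\textbf{The idempotents are orthogonal, not commuting.} Your $e_i$ do not multiply to give $e_S$: for $i\neq j$ one has $e_ie_j=0$. In $e_i\ast e_j$ the strand of $e_i$ starting at $j$ dead-ends at the middle vertex $j'$, which is isolated in $e_j$, so it is a floating edge (and symmetrically at position $i$). More generally, let $\epsilon_S$ be the diagram with the vertices of $S$ isolated on both sides and horizontal edges elsewhere. The $\epsilon_S$ are pairwise orthogonal idempotents with $\sum_S\epsilon_S=1$; this is exactly the identity element of Definition~\ref{dTL-alg-defn}. You are treating dead-end strands as if they were erased, but in $dTL_n(\delta)$ they kill the product. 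Hence $\prod_{i\in S}e_i=0$ for $|S|\geq 2$. Your complex collapses to $\bigoplus_i e_iA\to A$, and its cokernel is not $\1$.

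\textbf{Step 2 fails.} For any diagram $x$, the product $\epsilon_Sx$ equals $x$ if the set of isolated left vertices of $x$ is exactly $S$, and is $0$ otherwise. So $e_iA$ is spanned by the diagrams whose only isolated left vertex is $i$. Using all $\epsilon_S$ with $S\neq\emptyset$ repairs the diagrams with several isolated vertices. It does not reach the rest: a non-identity diagram with no isolated vertices lies in no $\epsilon_SA$ with $S\neq\emptyset$, and by the mirror argument in no $A\epsilon_S$ either. Your factorization $d=gd'$ only places $d$ in the two-sided ideal generated by $\epsilon_{\{i,i+1\}}$. Together with the identity diagram, these diagrams span the corner $\epsilon_\emptyset A\epsilon_\emptyset\cong TL_n(\delta)$, which is where the difficulty lies.

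\textbf{The fallback cannot work in principle.} Suppose commuting idempotents $f_1,\dots,f_m$ satisfy $\sum_i f_iA=I$. Then $I=fA$ with $f=1-\prod_i(1-f_i)$ idempotent, so $\1\cong(1-f)A$ would be projective. That is false in general. Take $n=2$ and $\delta=0$, and suppose a splitting is given by $u$ with $\varepsilon(u)=1$ and $ua=u\varepsilon(a)$ for all $a$. Then $u=u\epsilon_\emptyset$, so $u$ is a combination of three diagrams: the identity diagram, the cup--cap diagram $U$, and the diagram with isolated left vertices and a right cup. Computing directly, $uU=U\neq0$, a contradiction.

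The theorem asserts only that $\Tor$ and $\Ext$ vanish in positive degrees, not that $\1$ is projective. So you need a resolution of positive length whose higher terms are projectives killed by $\1\otimes-$ and $\Hom(-,\1)$. The paper builds this as follows.
\begin{itemize}
\item It takes the Mayer--Vietoris complex of the left cover of $I$ by $K_S$ (which is $A\epsilon_S$) together with the cup ideals $L_i$. Mixed intersections and intersections of distinct $K_S$ vanish (Lemma~\ref{zero-intersection-lem}).
\item Each intersection of $L_i$'s whose link state has a defect is generated by a Boyde-type idempotent (Lemmas~\ref{dTL-idempotent-lem} and~\ref{idem-lem-2}).
\item The key idea you are missing is Lemma~\ref{cup-proj-lem}. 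For $n$ even, the defect-free intersection $\cupmod(n)$, which is not projective in general in the Temperley--Lieb setting, is isomorphic to $K_{\{\ol{1},\dots,\ol{n}\}}$ via right multiplication by $d_{cup}$. It is therefore projective and annihilated by both functors.
\end{itemize}
That isomorphism is where the isolated-vertex idempotents do their work, not by generating $I$ themselves.
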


Our method of proof will use material from Boyde \cite{Boyde2}, which is a generalization of Sroka's work. In Section \ref{alg-sec}, we observe that the dilute Temperley--Lieb algebras can be equipped with an augmentation sending one particular basis diagram to $1\in k$ and all others to $0\in k$. There exists a two-sided ideal $I$ of $dTL_n(\delta)$ spanned $k$-linearly by all basis diagrams apart from this particular diagram and we note that $dTL_n(\delta)/I\cong k$. We identify a family of left ideals of $dTL_n(\delta)$ which cover the ideal $I$ and which allows us to form a chain complex called the \emph{Mayer--Vietoris complex} (recalled in Section \ref{Mayer--Vietoris-sec}). This is a generalization of Sroka's \emph{cellular Davis complex} \cite[Definition 8]{Sroka}. We show that, in our case, this complex is a projective resolution of the trivial $dTL_n(\delta)$-module, which we denote by $\1$, by left $dTL_n(\delta)$-modules. Finally we show that the functors $\1\otimes_{dTL_n(\delta)} -$ and $\Hom_{dTL_n(\delta)}(-,\1)$ are zero when evaluated on the positive degrees of our resolution, from which our result follows.

\subsection*{Conventions}
Throughout, $k$ will be a unital, commutative ring and $n$ will be a positive integer unless otherwise stated. We will denote the set $\lbrace 1,2,\dotsc, n\rbrace$ by $\ul{n}$.

\subsection*{Acknowledgements}
This work was carried out whilst the first author was a PhD student in the School of Mathematical and Physical Sciences at the University of Sheffield. We would like to thank James Brotherston and Natasha Cowley for helpful and interesting conversations whilst writing this paper. We would like to thank James Cranch and Sarah Whitehouse for their feedback and support in this project and related work. We would like to thank Rachael Boyd and Richard Hepworth for interesting conversations at the 2024 British Topology Meeting in Aberdeen. The second author is grateful to Guy Boyde for related conversations at the Isaac Newton Institute for Mathematical Sciences, Cambridge during the programme \emph{Equivariant homotopy theory in context} supported by EPSRC grant EP/Z000580/1. We would like to thank the anonymous referee. Their comments have led to our main result being significantly strengthened and the presentation of the paper being improved.

\section{Dilute Temperley--Lieb algebras}
\label{alg-sec}
In this section we give a precise definition of the dilute Temperley--Lieb algebras.

\begin{defn}
A \emph{dilute Temperley--Lieb $n$-diagram} consists of two identical columns of $n$ vertices placed side by side such that each vertex is connected to at most one other by an edge and edges are planar, in the sense that they can be drawn without crossings within the rectangle formed by the vertices.
\end{defn}

The vertices down the left-hand column of a dilute Temperley--Lieb $n$-diagram will be labelled by $1,\dots , n$ in ascending order from top to bottom and the vertices down the right-hand column will be labelled by $\ol{1},\dotsc , \ol{n}$ in ascending order from top to bottom. We identify dilute Temperley--Lieb diagrams up to isotopy.

\begin{defn}
\label{terminology-defn}
An edge that connects the left-hand column of vertices to the right-hand column of vertices will be called a \emph{propagating edge}. An edge that connects two vertices in the same column will be called a \emph{non-propagating edge}. A vertex not connected to any other by an edge will be called an \emph{isolated vertex}.
\end{defn}

With our basis diagrams in place we can make the definition of the dilute Temperley--Lieb algebras precise.

\begin{defn}
\label{dTL-alg-defn}
Let $\delta \in k$. The \emph{dilute Temperley--Lieb algebra}, $dTL_n(\delta)$, is the $k$-algebra with basis consisting of all dilute Temperley--Lieb $n$-diagrams with the multiplication defined by the $k$-linear extension of the following product of diagrams. Let $d_1$ and $d_2$ be dilute Temperley--Lieb $n$-diagrams. The product $d_1d_2$ is obtained by the following procedure:
\begin{itemize}
    \item Place the diagram $d_2$ to the right of the diagram $d_1$ and identify the vertices $\ol{1},\dotsc , \ol{n}$ in $d_1$ with the vertices $1,\dotsc , n$ in $d_2$. Call this diagram with three columns of vertices $d_1\ast d_2$. We drop the labels of the vertices in the middle column and we preserve the labels of the left-hand column and right-hand column.
    \item If we have an edge which is connected to neither the left-hand column nor the right-hand column and is not part of a loop, then the product is zero. Similarly, if we have an edge that connects the middle column to either the left-hand column or the right-hand column, but not both, then the product is zero. Such an edge is sometimes called a \emph{floating edge}.  
    \item Otherwise, count the number of loops that lie entirely within the middle column. Call this number $\alpha$.
    \item Make a new dilute Temperley--Lieb $n$-diagram, $d_3$, as follows. Given distinct vertices $x$ and $y$ in the set $\llb 1, \ol{1}, \dotsc , n , \ol{n}\rrb$, $d_3$ has an edge between $x$ and $y$ if there is a path from $x$ to $y$ in $d_1\ast d_2$. One can view the diagram $d_3$ as the result of discarding the vertices and loops in the middle column.
    \item We define $d_1d_2=\delta^{\alpha}d_3$.
\end{itemize}
The identity element is the formal sum of all diagrams that can be formed from the unique diagram with $n$ propagating edges by omitting $k$ edges ($0\leqslant k \leqslant n$).
\end{defn}

As noted in \cite[Subsection 2.1]{BSA}, this product is associative since the formation of propagating edges, closed loops and floating edges in a composite of three or more diagrams is independent of the order of multiplication.

\begin{eg}
The identity element in $dTL_2(\delta)$ is 
\begin{center}
\begin{tikzpicture}
\tikzset{>=stealth}
\draw (0,0) node[left] {\footnotesize $2$};
\draw (0,1) node[left] {\footnotesize $1$};
\draw (1,0) node[right] {\footnotesize $\ol{2}$};
\draw (1,1) node[right] {\footnotesize $\ol{1}$};
\fill (0,1) circle[radius=2pt];
\fill (0,0) circle[radius=2pt];
\fill (1,1) circle[radius=2pt];
\fill (1,0) circle[radius=2pt];
\draw (0,0) -- (1,0);
\draw (0,1) -- (1,1);
\draw (2,0.5) node {$+$};
\draw (3,0) node[left] {\footnotesize $2$};
\draw (3,1) node[left] {\footnotesize $1$};
\draw (4,0) node[right] {\footnotesize $\ol{2}$};
\draw (4,1) node[right] {\footnotesize $\ol{1}$};
\fill (3,1) circle[radius=2pt];
\fill (3,0) circle[radius=2pt];
\fill (4,1) circle[radius=2pt];
\fill (4,0) circle[radius=2pt];
\draw (3,1) -- (4,1);
\draw (5,0.5) node {$+$};
\draw (6,0) node[left] {\footnotesize $2$};
\draw (6,1) node[left] {\footnotesize $1$};
\draw (7,0) node[right] {\footnotesize $\ol{2}$};
\draw (7,1) node[right] {\footnotesize $\ol{1}$};
\fill (6,1) circle[radius=2pt];
\fill (6,0) circle[radius=2pt];
\fill (7,1) circle[radius=2pt];
\fill (7,0) circle[radius=2pt];
\draw (6,0) -- (7,0);
\draw (8,0.5) node {$+$};
\draw (9,0) node[left] {\footnotesize $2$};
\draw (9,1) node[left] {\footnotesize $1$};
\draw (10,0) node[right] {\footnotesize $\ol{2}$};
\draw (10,1) node[right] {\footnotesize $\ol{1}$};
\fill (9,1) circle[radius=2pt];
\fill (9,0) circle[radius=2pt];
\fill (10,1) circle[radius=2pt];
\fill (10,0) circle[radius=2pt];
\end{tikzpicture}    
\end{center}
\end{eg}

\begin{eg}
Here are some examples of composition in the dilute Temperley--Lieb algebra $dTL_2(\delta)$. We drop the labels on the vertices for a clearer picture. The first composite below is zero because we have a floating edge connecting the left-hand column to the middle column.
\begin{center}
\begin{tikzpicture}
\fill (0,0) circle[radius=2pt];
\fill (1,0) circle[radius=2pt];
\fill (0,1) circle[radius=2pt];
\fill (1,1) circle[radius=2pt]; 
\fill (2,0) circle[radius=2pt];
\fill (2,1) circle[radius=2pt]; 
\draw (0,1) -- (1,1) -- (2,1);
\draw (0,0) -- (1,0);
\draw (2.5,0.5) node {$=$};
\draw (3,0.5) node {$0$};
\fill (6,0) circle[radius=2pt];
\fill (7,0) circle[radius=2pt];
\fill (6,1) circle[radius=2pt];
\fill (7,1) circle[radius=2pt]; 
\fill (8,0) circle[radius=2pt];
\fill (8,1) circle[radius=2pt];
\draw (6,0) -- (7,1) -- (8,0);
\draw (8.5,0.5) node {$=$};
\fill (9,0) circle[radius=2pt];
\fill (9,1) circle[radius=2pt];
\fill (10,0) circle[radius=2pt];
\fill (10,1) circle[radius=2pt]; 
\draw (9,0) -- (10,0);
\end{tikzpicture}
\end{center}
The first composite below is zero because we have an edge which is contained entirely within the middle column. In the second composite below we form a loop in the middle column and so we obtain a factor of $\delta \in k$.
\begin{center}
\begin{tikzpicture}
\fill (0,0) circle[radius=2pt];
\fill (1,0) circle[radius=2pt];
\fill (0,1) circle[radius=2pt];
\fill (1,1) circle[radius=2pt]; 
\fill (2,0) circle[radius=2pt];
\fill (2,1) circle[radius=2pt];
\path[-] (0,0) edge [bend right=20] (0,1);
\path[-] (1,0) edge [bend left=20] (1,1);
\path[-] (2,0) edge [bend left=20] (2,1);
\draw (2.5,0.5) node {$=$};
\draw (3,0.5) node {$0$};
\fill (6,0) circle[radius=2pt];
\fill (7,0) circle[radius=2pt];
\fill (6,1) circle[radius=2pt];
\fill (7,1) circle[radius=2pt]; 
\fill (8,0) circle[radius=2pt];
\fill (8,1) circle[radius=2pt];
\path[-] (6,0) edge [bend right=20] (6,1);
\path[-] (7,0) edge [bend left=20] (7,1);
\path[-] (7,0) edge [bend right=20] (7,1);
\path[-] (8,0) edge [bend left=20] (8,1);
\draw (8.5,0.5) node {$=$};
\draw (9,0.5) node {$\delta$};
\fill (9.5,0) circle[radius=2pt];
\fill (10.5,0) circle[radius=2pt];
\fill (9.5,1) circle[radius=2pt];
\fill (10.5,1) circle[radius=2pt];
\path[-] (9.5,0) edge [bend right=20] (9.5,1);
\path[-] (10.5,0) edge [bend left=20] (10.5,1);
\end{tikzpicture}    
\end{center}
We refer the reader to \cite{BSA} for further examples of composition in dilute Temperley--Lieb algebras.
\end{eg}

\begin{rem}
The basis diagrams of $dTL_n(\delta)$ are the same as the basis diagrams of the Motzkin algebra (see \cite{BHal,JoYa} for instance). However, the product of diagrams in the Motzkin algebra is different to the product of diagrams in the dilute Temperley--Lieb algebra.    
\end{rem}

Since the composite of two dilute Temperley--Lieb $n$-diagrams with $i$ propagating edges and $j$ propagating edges respectively is a scalar multiple of a diagram with at most $\min(i,j)$ propagating edges (see \cite[Subsection 2.1]{BSA} for example), we can define a two-sided ideal $I$ of $dTL_n(\delta)$ spanned $k$-linearly by dilute Temperley--Lieb $n$-diagrams having at most $n-1$ propagating edges. We therefore have an augmentation $\varepsilon \colon dTL_n(\delta) \rightarrow k$ determined by sending the unique diagram with $n$ propagating edges to $1\in k$ and all other diagrams to $0\in k$. We denote by $\mathbbm{1}$ the \emph{trivial $dTL_n(\delta)$-module}: a single copy of the ground ring $k$, where $dTL_n(\delta)$ acts on $k$ via this augmentation. This allows us to define the homology and cohomology of the dilute Temperley--Lieb algebras as $\Tor_{\star}^{dTL_n(\delta)}(\1,\1)$ and $\Ext_{dTL_n(\delta)}^{\star}(\1,\1)$ following \cite[Definition 2.4.4]{Benson1}.

\section{Mayer--Vietoris complex}
\label{Mayer--Vietoris-sec}

In this section we recall the concept of a $k$-free idempotent left cover and the Mayer--Vietoris complex from \cite[Section 2]{Boyde2}.

\begin{defn}
Let $A$ be a $k$-algebra. Let $I$ be a two-sided ideal of $A$. Let $w\geqslant h \geqslant 1$. An \emph{idempotent left cover of $I$ of height $h$ and width $w$} is a collection of left ideals $J_1,\dotsc , J_w$ in $A$ such that
\begin{itemize}
    \item $J_1+\cdots +J_w=I$;
    \item for $S\subset \ul{w}$ with $\llv S\rrv \leqslant h$, the intersection
    \[\bigcap_{i\in S} J_i\]
    is either zero or is a principal left ideal generated by an idempotent.
\end{itemize}
If $I$ is free as a $k$-module, then an idempotent left cover is said to be \emph{$k$-free} if there is a choice of $k$-basis for $I$ such that each $J_i$ is free on a subset of this basis.
\end{defn}

\begin{defn}
Let $A$ be a $k$-algebra. Let $I\subset A$ be a two-sided ideal. Let $J_1,\dotsc , J_w$ be an idempotent left cover of $I$. The \emph{Mayer--Vietoris complex associated to the idempotent left cover}, $C_{\star}$, is the chain complex of left $A$-modules defined as follows. We set
\[C_p= \underset{\llv S \rrv = p}{\bigoplus_{S \subset \ul{w}}}\bigcap_{i\in S} J_i\]
for $1\leqslant p \leqslant w$. We set $C_0=A$, $C_{-1}=A/I$ and $C_n=0$ for $n>w$ and $n<-1$.

The differential $C_0\rightarrow C_{-1}$ is the projection map $A\rightarrow A/I$. The differential $C_1\rightarrow C_0$ is the direct sum of the inclusion of the left ideals $J_i\rightarrow A$. For $p\geqslant 2$, the differential $C_p\rightarrow C_{p-1}$ is defined on the summand $\cap_{i\in S} J_i$ by
\[x\mapsto \sum_{j\in S} (-1)^{\#(S,j)} i_{(S,j)}(x)  \]
where $\#(S,j)$ is the number of elements of $S$ that are less than $j$ and $i_{(S,j)}$ is the inclusion
\[\bigcap_{i\in S} J_i \rightarrow \bigcap_{i\in S\setminus \llb j\rrb} J_i.\]
\end{defn}

The Mayer--Vietoris complex satisfies the following important property (see \cite[Lemma 2.2]{Boyde2}).

\begin{lem}
\label{Boyde-lem}
The Mayer--Vietoris complex associated to a $k$-free idempotent left cover is acyclic. \fakeqed
\end{lem}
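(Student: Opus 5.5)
The plan is to prove exactness of $C_\star$ as a complex of $k$-modules, since acyclicity does not depend on the $A$-module structure. The low degrees are immediate. The map $C_0=A\to A/I=C_{-1}$ is surjective with kernel $I$. The image of $C_1\to C_0$ is $J_1+\cdots+J_w=I$ by the first axiom of a cover, so the complex is exact at $C_{-1}$ and at $C_0$. It therefore remains to show that the truncated complex
\[\cdots \to C_2 \to C_1 \to I \to 0\]
is exact, where $C_1\to I$ is the sum of the inclusions.

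This is where the $k$-freeness hypothesis is used. Fix a $k$-basis $B$ of $I$ such that each $J_i$ is free on a subset $B_i\subset B$. I will use two facts about spans of subsets of a basis.
\begin{itemize}
\item For $S\subset \ul{w}$, the intersection $\bigcap_{i\in S}J_i$ is free on $B_S:=\bigcap_{i\in S}B_i$. To see this, compare coefficients in the basis $B$: an element lies in the span of $B_i$ if and only if its coefficients vanish outside $B_i$.
\item Since $\sum_i J_i = I$, the span of $\bigcup_i B_i$ is all of $I$, and hence $\bigcup_i B_i=B$.
\end{itemize}
All differentials are signed sums of inclusions, and they send a basis element $b$ sitting in the summand indexed by $S$ to $\pm b$ in the summands indexed by $S\setminus\{j\}$. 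So the truncated complex splits as a direct sum over $b\in B$ of subcomplexes $D^{(b)}_\star$.

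For fixed $b$, set $T_b=\{i\in\ul{w} : b\in B_i\}$; by the second fact above, $T_b$ is nonempty. The subcomplex $D^{(b)}_\star$ is as follows.
\begin{itemize}
\item In degree $p\geqslant 1$ it is free on the $p$-element subsets $S\subset T_b$.
\item In degree $0$ it is $k\cdot b\subset I$.
\item The differential is $S\mapsto \sum_{j\in S}(-1)^{\#(S,j)}(S\setminus\{j\})$, with every singleton mapping to $b$.
\end{itemize}
This is exactly the augmented simplicial chain complex of the full simplex on the vertex set $T_b$, shifted by one degree. Since $T_b\neq\emptyset$ it is acyclic. Concretely, fix $t=\min T_b$ and define $h(S)=S\cup\{t\}$ if $t\notin S$, $h(S)=0$ if $t\in S$, and $h(b)=\{t\}$. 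Because $t$ is the minimum, $\#(S\cup\{t\},t)=0$, and a direct check gives $dh+hd=\mathrm{id}$. Summing over $b$ gives exactness of the truncated complex, and hence acyclicity of $C_\star$.

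The only points needing care are the identification of intersections with spans of intersections of basis subsets, which is precisely what $k$-freeness provides, and the sign bookkeeping in the homotopy; choosing $t$ minimal makes the signs work out cleanly. Note that the height of the cover plays no role here: it is only needed later, to ensure that the terms $C_p$ are projective.
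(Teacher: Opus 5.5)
Your proof is correct and complete. The paper gives no argument of its own for this lemma and simply quotes Lemma 2.2 of Boyde \cite{Boyde2}. Your argument supplies a self-contained justification for that cited result: the reduction to the truncated complex over $I$, the basis-wise splitting into shifted augmented chain complexes of the nonempty simplices on $T_b$, and the cone homotopy at $t=\min T_b$ (whose signs check out). Your closing remark can be strengthened slightly: acyclicity uses only $k$-freeness and $J_1+\cdots+J_w=I$, so neither the height nor the idempotent-generation condition plays any role.
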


We will require the following lemma.

\begin{lem}
\label{Boyde-lem-2}
Let $X$ be a right $A$-module and let $Y$ be a left $A$-module. Let $J$ be a left ideal of $A$ which is generated by idempotents and acts as multiplication by $0$ on both $X$ and $Y$. Then
\[X\otimes_A J=0 \quad \text{and} \quad \Hom_A(J, Y)=0.\]
\end{lem}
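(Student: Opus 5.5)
We prove both vanishing statements by reducing to the case of a single idempotent. The key observation is that every element of $J$ can be absorbed by one idempotent in $J$ acting on the right. So we first show: for every $x\in J$ there is an idempotent $e\in J$ with $x=xe$.

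Since $J$ is generated as a left ideal by idempotents, write $x=\sum_{i=1}^m a_i e_i$ with each $e_i\in J$ idempotent. Then $a_ie_i=a_ie_i\cdot e_i$, so each summand of $x$ is right-fixed by some idempotent of $J$. If $J$ is principal, generated by a single idempotent $e$ (the case used in the paper, since the intersections in an idempotent left cover are principal generated by an idempotent), then $x=ae$ gives $x=xe$ with one idempotent. In general we need not produce a single idempotent; it suffices to argue summand by summand, and we do so below.

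\emph{Tensor vanishing.} The group $X\otimes_A J$ is generated by elementary tensors $y\otimes x$ with $y\in X$ and $x\in J$. By additivity it suffices to treat $x=ae$ with $e\in J$ idempotent and $a\in A$. Then
\[y\otimes ae=y\otimes ae\cdot e=yae\otimes e.\]
Since $e\in J$ acts as zero on $X$, we have $yae=(ya)e=0$, so $y\otimes x=0$. Hence $X\otimes_A J=0$.

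\emph{Hom vanishing.} Let $f\colon J\to Y$ be a left $A$-module map. For $x=ae$ as above,
\[f(x)=f(ae\cdot e)=ae\,f(e).\]
Now $f(e)=f(e\cdot e)=e\,f(e)$, and $e$ acts as zero on $Y$, so $f(e)=0$. Therefore $f(x)=0$ for every generator-type element, hence $f$ vanishes on all of $J$ by additivity. Thus $\Hom_A(J,Y)=0$.

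The only delicate point is that the hypothesis gives generation by possibly several idempotents rather than one. Decomposing each $x$ into summands of the form $ae$ handles this, because each summand is right-fixed by its own idempotent and both constructions are additive. No global unit for $J$ is needed.
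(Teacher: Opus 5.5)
Your proof is correct and follows the paper's route: the paper proves the $\Hom$ part by the same computation $f(\alpha e)=f(\alpha e^2)=\alpha e f(e)=0$ and cites Boyde for the tensor part, whose standard proof is the identity $y\otimes ae=yae\otimes e=0$ that you write out, and your summand-by-summand additivity makes explicit the reduction to single idempotent generators that the paper leaves implicit. The only blemish is your opening claim that every $x\in J$ satisfies $x=xe$ for a single idempotent $e\in J$: you never prove it, it can fail when $J$ is not principal, and your argument does not use it, so delete it.
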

\begin{proof}
    The statement on the tensor product is \cite[Lemma 2.3]{Boyde2}. The proof of the second part runs similarly. It suffices to consider an idempotent generator of $J$. Let $\alpha \in A$ and let $e$ be the idempotent generator of $J$. For $f \in \Hom_A(J, Y)$, we have $f(\alpha e)=f(\alpha e^2)=\alpha ef(e)=0$, where the first equality uses idempotence of $e$, the second uses $A$-linearity and the final equality uses the fact that $J$ acts trivially on $Y$.
\end{proof}

\section{Link states and technical lemmas}

\subsection{Link states}
\label{link-state-subsec}

We recall the notion of link state for dilute Temperley--Lieb diagrams and define families of ideals based on these. We refer the reader to \cite[Section 3]{BSA} for a more extensive discussion of link states for dilute Temperley--Lieb diagrams. Our definitions and notation follow that of Boyde \cite[Section 1]{Boyde}.

\begin{defn}
By slicing vertically down the middle of a dilute Temperley--Lieb $n$-diagram we obtain its \emph{left link state} and \emph{right link state}. Explicitly, we split all propagating edges at their midpoint and preserve all non-propagating edges. A propagating edge that has been split is called a \emph{defect} in its link state.
\end{defn}

\begin{rem}
The right link state of a dilute Temperley--Lieb $n$-diagram consists of a column of $n$ vertices, labelled $\ol{1},\dotsc \ol{n}$, such that at each vertex we have one of the following three situations:
\begin{itemize}
    \item the vertex has a hanging edge, called a \emph{defect};
    \item the vertex is connected to precisely one other vertex by a non-propagating edge;
    \item the vertex is an isolated vertex.
\end{itemize}
\end{rem}

\begin{eg}
Here is an example of a dilute Temperley--Lieb $6$-diagram (on the left) and its right link state (on the right).   

\begin{center}
\begin{tikzpicture}
\fill (7,0) circle[radius=2pt];
\fill (7,1) circle[radius=2pt];
\fill (7,2) circle[radius=2pt];
\fill (7,-1) circle[radius=2pt];
\fill (7,-2) circle[radius=2pt];
\fill (7,-3) circle[radius=2pt];
\path[-] (7,-1) edge [bend left=20] (7,0);
\draw (7,2) -- (6,2);
\draw (7,-2) -- (6,-2);

\fill (1,0) circle[radius=2pt];
\fill (1,1) circle[radius=2pt];
\fill (1,2) circle[radius=2pt];

\fill (3,0) circle[radius=2pt];
\fill (3,1) circle[radius=2pt];
\fill (3,2) circle[radius=2pt];

\fill (1,-1) circle[radius=2pt];
\fill (1,-2) circle[radius=2pt];
\fill (3,-1) circle[radius=2pt];
\fill (3,-2) circle[radius=2pt];
\fill (1,-3) circle[radius=2pt];
\fill (3,-3) circle[radius=2pt];
\path[-] (3,-1) edge [bend left=20] (3,0);
\draw (3,2) -- (1,-1);
\draw (1,-2) -- (3,-2);
\path[-] (1,0) edge [bend right=20] (1,2);

\end{tikzpicture}    
\end{center}
\end{eg}

\begin{defn}
If we have a right link state of a dilute Temperley--Lieb $n$-diagram we can remove two defects and replace them with a non-propagating edge joining the two vertices, so long as the resulting diagram is also a right link state of a dilute Temperley--Lieb $n$-diagram. This operation is called a \emph{splice}. For a right link state $p$, let $J_p$ denote the left ideal of $dTL_n(\delta)$ with basis given by the diagrams having right link state obtained from $p$ by a (possibly empty) sequences of splices.
\end{defn}

\begin{rem}
It follows from the definition of dilute Temperley--Lieb $n$-diagram that we can splice defects at vertices $\ol{i}$ and $\ol{k}$ if and only if there is not a defect at any vertex $\ol{j}$ with $i<j<k$.
\end{rem}

In order to prove our results it will be useful to consider \emph{double diagrams} and \emph{sesqui-diagrams} as introduced in \cite[Subsection 8.1]{Boyde}.

\begin{defn}
The composition of two dilute Temperley--Lieb diagrams $d_1$ and $d_2$ involved a diagram $d_1\ast d_2$ with three columns of vertices, formed by identifying the right-hand column of vertices of $d_1$ with the left-hand column of vertices in $d_2$. We call such a diagram a \emph{double diagram}. Henceforth, when using double diagrams we will label the left-hand column of vertices by $1,\dotsc , n$, the middle column of vertices with $1^{\prime},\dotsc , n^{\prime}$ and the right-hand column of vertices with $\ol{1},\dotsc , \ol{n}$.     
\end{defn}

\begin{defn}
Let $p$ be a right link state of a dilute Temperley--Lieb $n$-diagram. Let $d$ be a dilute Temperley--Lieb $n$-diagram. We define the \emph{sesqui-diagram} $(p,d)$ to be the diagram formed by identifying the vertices $\ol{1},\dotsc , \ol{n}$ in $p$ with the vertices $1,\dotsc , n$ in $d$. After performing this identification, we relabel the vertices. The left-hand column of vertices will be labelled by $1^{\prime},\dotsc ,n^{\prime}$ and we preserve the labels $\ol{1},\dotsc , \ol{n}$ on the right-hand column.  
\end{defn}

\subsection{Technical lemmas}
Boyde proves two lemmas (\cite[Lemmas 8.6 and 8.12]{Boyde}) which allow him to identify the necessary idempotents to prove parameter-independent results for the Temperley--Lieb algebras. In this subsection, we prove the analogues of these lemmas for the dilute Temperley--Lieb algebras. Suppose we have a right link state, $p$. Lemma \ref{dTL-idempotent-lem}, below, provides conditions under which a diagram $e$ will satisfy $ye=y$ for all $y\in J_p$. In other words, if there exists such a diagram $e$, it is a principal idempotent generator of $J_p$. Following that, Lemma \ref{idem-lem-2} shows that such a diagram always exists if $p$ has at least one defect. We note that three of our conditions in Lemma \ref{dTL-idempotent-lem} (Conditions \ref{C1}, \ref{C3} and \ref{C4}) correspond precisely to the conditions given by Boyde (Lemma 8.6). Our extra condition ensures compatibility with the isolated vertices which are allowed in dilute Temperley--Lieb diagrams.

\begin{lem}
\label{dTL-idempotent-lem}
Let $p$ be a right link state of a dilute Temperley--Lieb $n$-diagram. Suppose that $e$ is a diagram in $dTL_n(\delta)$ such that
\begin{enumerate}
    \item\label{C1} $e$ has right link state $p$,
    \item\label{C2} if the vertex $\ol{j}$ is isolated in $p$ then the vertices $j$ and $\ol{j}$ are isolated in $e$,
    \item\label{C3} if $p$ has a defect at vertex $\ol{j}$ then there is a sequence of edges in the sesqui-diagram $(p,e)$ that connects the vertices $j^{\prime}$ and $\ol{j}$,
    \item\label{C4} in the sesqui-diagram $(p,e)$, every non-propagating edge in the left-hand column appears in precisely one of the sequences of edges arising in Condition \ref{C3}.
\end{enumerate}
Then for any $y\in J_p$, we have $ye=y$.
\end{lem}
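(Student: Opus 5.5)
By linearity it suffices to take $y=d$ a basis diagram whose right link state $q$ is obtained from $p$ by a (possibly empty) sequence of splices. We then compute the double diagram $d\ast e$. The key observation is that the middle column of $d\ast e$ is governed entirely by the sesqui-diagram $(q,e)$. Moreover, $q$ and $p$ agree except that some defects of $p$ have been joined in pairs into non-propagating edges. The plan is to show three things: $d\ast e$ has no floating edge, it has no closed loop, and the resulting diagram $d_3$ equals $d$. Then $de=\delta^0 d=d$.

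We treat the vertices of the middle column $j'$ according to their status in $p$.

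\begin{itemize}
\item \emph{$\ol{j}$ isolated in $p$.} Splices do not alter isolated vertices, so $\ol{j}$ is isolated in $q$, and hence $j'$ is not an endpoint of any edge of $d$. By Condition \ref{C2}, $j'$ is also isolated on the $e$ side. So $j'$ is isolated in $d\ast e$ and contributes nothing. Condition \ref{C2} also makes $\ol{j}$ isolated in $d_3$, as it is in $d$.
\item \emph{$\ol{j}$ joined to $\ol{l}$ by a non-propagating edge of $p$.} This edge is also in $q$ and in $d$. By Condition \ref{C1}, $e$ has the same right link state $p$, so $\ol{j}$ and $\ol{l}$ are joined in $e$.
\item \emph{$\ol{j}$ a defect of $p$.} By Condition \ref{C3} there is a path in $(p,e)$ from $j'$ to $\ol{j}$. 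In $(p,e)$, every non-propagating left-hand edge of $e$ is absorbed into one of these paths by Condition \ref{C4}. The only other middle-column structure comes from the non-propagating edges of $p$ attached from the left, and these are also traversed by the paths.
\end{itemize}

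Next we pass from $(p,e)$ to $(q,e)$. If $\ol{j}$ remains a defect in $q$, then in $d$ it is the endpoint of a propagating edge from some left vertex $x$. The path of Condition \ref{C3} extends this to a path from $x$ to $\ol{j}$ in $d\ast e$, which reproduces the edge of $d$.

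If instead defects $\ol{j}$ and $\ol{l}$ of $p$ were spliced in $q$, the paths from $j'$ to $\ol{j}$ and from $l'$ to $\ol{l}$ are joined by the spliced edge at $j',l'$. This yields a path in $d\ast e$ from $\ol{j}$ to $\ol{l}$, reproducing the non-propagating right edge of $d$. Because the splice edge is planar and the paths are disjoint, no closed loop is created: the only middle cycles would have to lie entirely in the middle column, but every middle edge of $e$ lies on a path ending at the right column.

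Condition \ref{C4} is exactly what guarantees there are no floating edges and no loops. Every middle-column edge coming from $e$ lies on some path terminating at the right-hand column. Every middle-column edge coming from $d$ is either an edge of $p$ handled above or a splice edge, which connects two such paths. Finally, the left half of $d$ is untouched, so the left link state of $d_3$ is that of $d$.

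The main obstacle is the bookkeeping in $(q,e)$ after splicing. We must verify that the Condition \ref{C3} paths for distinct defects are vertex-disjoint, which follows since each vertex has degree at most two. With this, each splice merges exactly two paths into one right-to-right edge rather than closing up a loop, and every component of the middle column meets the outer columns exactly as in $d$.
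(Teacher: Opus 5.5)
Your proposal is correct and follows essentially the same route as the paper. Both arguments reduce to a basis diagram $y$ and use Condition 1 to reproduce the cups of $p$, Condition 3 to reproduce the propagating edges and spliced cups of $y$ in $y\ast e$, Condition 2 for isolated vertices, and Condition 4 to rule out floating edges, loops and edges lying entirely in the middle column. Your explicit degree-at-most-two argument that the Condition 3 paths are vertex-disjoint is a small detail the paper leaves implicit.
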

\begin{proof}
We will prove the following equivalent statement. We will show that if two vertices in the set $\llb 1,\ol{1},\dotsc , n,\ol{n}\rrb$ are connected in $y$, then they are also connected in the double diagram $y\ast e$ and that the double diagram $y\ast e$ contains no loops, no floating edges and no edges lying entirely within the middle column. We break the proof up into parts.

\begin{enumerate}
\item In the first part, we show that if two vertices in the set $\llb 1,\ol{1},\dotsc , n,\ol{n}\rrb$ are connected in $y$, then they are also connected in the double diagram $y\ast e$.

We start by observing that any non-propagating edge in the left hand column of $y$ is present in the double diagram $y \ast e$. Suppose $y$ has a non-propagating edge in the right-hand column between vertices $\ol{i}$ and $\ol{j}$. There are two cases. Since $y\in J_p$, its right link state is formed from $p$ by a (possibly empty) number of splices. Therefore, a non-propagating edge in the right hand column of $y$ is either also a non-propagating edge in $p$ or it is the result of a splice. If the edge exists in $p$, then the non-propagating edge exists in the right-hand column of the double diagram $y\ast e$ since $e$ has right link state $p$ by Condition \ref{C1}. If the non-propagating edge in $y$ is the result of splicing two defects in $p$, then Condition \ref{C3} tells us that there is a sequence of edges joining the vertex $\ol{i}$ in the right column of the double diagram to the vertex $i^{\prime}$. This is joined to vertex $j^{\prime}$ in the double diagram by the non-propagating edge in $y$ and Condition \ref{C3} tells us that vertex $j^{\prime}$ is connected to the vertex $\ol{j}$ in the right-hand column of $y\ast e$. Therefore the vertices $\ol{i}$ and $\ol{j}$ are connected in the right-hand column of the double diagram $y\ast e$.

To finish, suppose $y$ has a propagating edge from $i$ to $\ol{k}$, so that in the double diagram $y\ast e$ there is an edge from the vertex $i$ to the vertex $k^{\prime}$. Therefore, in the right link state of $y$ there is a defect at $\ol{k}$. Since $y \in J_p$, this means that there is a defect at $\ol{k}$ in $p$ as well. Condition \ref{C3} tells us that there is a sequence of edges in the sesqui-diagram $(p,e)$ joining the vertices $k^{\prime}$ and $\ol{k}$ and therefore the vertex $i$ in the left-hand column of the double diagram $y\ast e$ is connected to $\ol{k}$ in the right-hand column.

We have shown that all vertices that were connected in $y$ are connected in the double diagram $y\ast e$.

\item Condition \ref{C2} tells us that a vertex is isolated in $p$ if and only if it isolated in the right-hand column of $e$ if and only if it is isolated in the middle column of the double diagram $y \ast e$. Therefore the isolated vertices in the left-hand column of $y$ are precisely the isolated vertices in the left-hand column of $y\ast e$ and the isolated vertices in the right-hand column of $y$ are precisely the isolated vertices in the right-hand column of $y\ast e$.

\item We now show that there can be no floating edges. We have already shown that any propagating edge in $y$ is connected to the right-hand column of the double diagram $y\ast e$ and therefore cannot be a floating edge in the double diagram. The only other possibility is a floating edge arising from the diagram $e$. Condition \ref{C3} tells us that if $e$ has a propagating edge terminating at $\ol{j}$, then there is a sequence of edges connecting the vertices $j^{\prime}$ and $\ol{j}$ in the sesqui-diagram $(p,e)$. Since $p$ is the right link state of $e$ it also has a defect at $j$ and since $y\in J_p$, it follows that every propagating edge in $e$ is part of a sequence of edges in $y\ast e$ that connect the left-hand column to the right-hand column or connect the right-hand column to the right-hand column. Therefore we can have no floating edges.

\item We now show that there can be no loops. A loop in the double diagram $y\ast e$ must be formed by a non-zero number of non-propagating edges in the right-hand column of $y$ and a non-zero number of non-propagating edges in the left-hand column of $e$. However, Condition \ref{C4} tells us that every non-propagating edge in the left-hand column of $e$ must be part of a sequence of edges connecting a defect in $(p,e)$ at some vertex $j^{\prime}$ to the vertex $\ol{j}$. Since $y\in J_p$, in the double diagram $y\ast e$ this sequence of edges must either connect to the left-hand column or the right-hand column and so no loops can be formed.

\item Finally, we show that there are no edges lying entirely within the middle column. We have already dealt with non-propagating edges in the left-hand column of $e$ in the previous point. We have also shown that any non-propagating edge in the right-hand column of $y$ arising from a splice of two defects in its right link state must have both vertices connected to the right-hand column of the double diagram $y\ast e$, so cannot lie entirely within the middle column. Finally, if we have a non-propagating edge in the right-hand column of $y$ which also exists in the link state $p$, Conditions \ref{C3} and \ref{C4}, together with the fact $y \in J_p$ tells us that this edge must be connected to the right-hand column of the double diagram $y\ast e$ and therefore cannot lie entirely within the middle column.
\end{enumerate}
Therefore, for any $y\in J_p$, $ye=y$ as required.
\end{proof}

\begin{lem}
\label{idem-lem-2}
Let $p$ be a right link state of a dilute Temperley--Lieb $n$-diagram with at least one defect. There exists a diagram $e_p\in dTL_n(\delta)$ satisfying the conditions of Lemma \ref{dTL-idempotent-lem}.    
\end{lem}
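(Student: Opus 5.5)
We construct $e_p$ explicitly. Its right link state is $p$ (Condition \ref{C1}), and for every vertex $\ol{j}$ isolated in $p$ we make $j$ isolated in the left column of $e_p$ (Condition \ref{C2}). The remaining work is to fill in the left column on the vertices $j$ with $\ol{j}$ not isolated in $p$, so that Conditions \ref{C3} and \ref{C4} hold. Let $D=\{j : \ol{j} \text{ is a defect of } p\}$ and $N=\{j: \ol{j} \text{ lies on a non-propagating edge of } p\}$. Isolated vertices play no role in planarity, so we may delete them and work with the link state $p'$ on the vertex set $D\cup N$. This $p'$ is an ordinary Temperley--Lieb link state with at least one defect, and Boyde's construction \cite[Lemma 8.12]{Boyde} supplies a Temperley--Lieb diagram on $D\cup N$ satisfying his three conditions. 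Reinserting the isolated vertices at their original positions, in both columns, gives a planar dilute diagram $e_p$.

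Since this reduction leans on Boyde, we also describe the construction directly, in case it must be given by hand. The defects of $p$ occur at positions $d_1<\dots<d_m$ with $m\ge 1$, and the non-propagating edges of $p$ are nested arcs lying in the gaps between consecutive defects, or above $d_1$, or below $d_m$. In the left column we place the mirror image of each such arc, shifted by one position so that it interlocks with the arcs of $p$. Concretely, consider a maximal block of consecutive positions in $N$ together with an adjacent defect position $d$. Inside this block the arcs of $p$ form a balanced bracket word, with $\ol{d}$ at one end. In the left column we pair the positions of this block, now including $d$ but leaving out the vertex at the far end, in the complementary bracket pattern. That leftover vertex is joined by a propagating edge to $\ol{d}$. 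In the sesqui-diagram $(p,e_p)$ the path starting from $d'$ then zigzags through the arcs of $p$ and the left arcs of $e_p$, and ends at $\ol{d}$ via the propagating edge. This gives Condition \ref{C3}.

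Each block of arcs must be assigned to a defect adjacent to it. Blocks above $d_1$ go to $d_1$, blocks below $d_m$ go to $d_m$, and a block between $d_i$ and $d_{i+1}$ goes to either neighbour, say $d_i$. The remaining defects are joined to themselves by the straight edge $d_i \to \ol{d_i}$. Planarity holds because each zigzag stays inside its own interval between defects. Condition \ref{C4} holds because every left arc of $e_p$ is created within exactly one such zigzag, so it lies on exactly one of the paths from Condition \ref{C3}. The only difference from Boyde's setting is that isolated positions may be interspersed inside a block. These are simply skipped, and Condition \ref{C2} keeps them isolated on both sides of $e_p$, so they never join any path.

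The main obstacle is verifying planarity of the shifted, interlocking left arcs together with the propagating edges, and checking that the zigzag path really ends at $\ol{d}$ rather than closing up into a loop. For this we would rely on the reduction to the Temperley--Lieb case after deleting isolated vertices. That reduction is legitimate because Conditions \ref{C1}, \ref{C3} and \ref{C4} depend only on the non-isolated vertices. Deletion and reinsertion of isolated vertices preserve non-crossing, since in the planar model an isolated vertex is just a point not on any edge. The hypothesis that $p$ has at least one defect is exactly what makes this construction possible: with no defects, the arcs of $p$ would have nowhere to route their paths.
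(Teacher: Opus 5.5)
Your main argument is exactly the paper's proof, and it is correct: delete the isolated vertices, apply Boyde's Lemma 8.12 to the resulting Temperley--Lieb link state (which still has a defect), then reinsert the isolated vertices at the same positions in both columns to obtain Condition~\ref{C2}.

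Your supplementary by-hand construction is not sound as stated. Suppose all $m+1$ gaps around the $m$ defects contain arcs, for example $n=5$ with $p$ having arcs $\{\ol{1},\ol{2}\}$, $\{\ol{4},\ol{5}\}$ and a single defect at $\ol{3}$. Then some defect must absorb two blocks, so its left vertex is paired twice and two propagating edges end at $\ol{d}$. The zigzags have to be chained into a single path instead; here that means left arcs $(2,5)$, $(3,4)$ and a propagating edge $1\to\ol{3}$. Since you explicitly fall back on the reduction, this flaw does not affect your proof.
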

\begin{proof}
We start with our link state $p$. Since $p$ has at least one defect, it can have at most $n-1$ isolated vertices. Suppose we have $x$ isolated vertices with $0\leqslant x \leqslant n-1$. We temporarily forget the isolated vertices. The remaining vertices and edges form the right link state of Temperley--Lieb $(n-x)$-diagram with at least one defect. Applying \cite[Lemma 8.12]{Boyde} yields a Temperley--Lieb $(n-x)$-diagram satisfying Conditions \ref{C1}, \ref{C3} and \ref{C4} of Lemma \ref{dTL-idempotent-lem}. Replacing the $x$ isolated vertices in the right-hand column and inserting $x$ isolated vertices in the left-hand column symmetrically yields a dilute Temperley--Lieb $n$-diagram which also satisfies Condition \ref{C2} of Lemma \ref{dTL-idempotent-lem}.
\end{proof}

\section{An idempotent cover: proving Theorem \ref{main-thm}}
\label{main-thm-sec}
In this section we provide an idempotent left cover of the two-sided ideal $I$ spanned $k$-linearly by all dilute Temperley--Lieb $n$-diagrams having fewer than $n$ propagating edges and use it to prove Theorem \ref{main-thm}.

\begin{defn}
For each non-empty subset $S\subseteq \lbrace \ol{1},\dotsc , \ol{n}\rbrace$, let $K_S$ be the left ideal of $dTL_n(\delta)$ spanned $k$-linearly by dilute Temperley--Lieb $n$-diagrams such that the isolated vertices in the right-hand column correspond precisely to the elements of $S$.    
\end{defn}

\begin{defn}
For $1\leqslant i \leqslant n-1$, let $L_i$ denote the left ideal of $dTL_n(\delta)$ spanned $k$-linearly by dilute Temperley--Lieb $n$-diagrams such that there are no isolated vertices in the right-hand column and the vertices $\ol{i}$  and $\ol{i+1}$ are connected by a non-propagating edge.  
\end{defn}

\begin{lem}
\label{cover-lem}
The collection of left ideals consisting of $K_S$ for all non-empty subsets $S$ of $\lbrace \ol{1},\dotsc , \ol{n}\rbrace$ and $L_i$ for $1\leqslant i \leqslant n-1$ cover the two-sided ideal $I$.     
\end{lem}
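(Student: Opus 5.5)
We need to show that the sum of the $K_S$ and $L_i$ equals $I$. Here ``cover'' means the first condition of an idempotent left cover, $\sum_S K_S+\sum_i L_i = I$, so we prove the two inclusions separately. Each of these left ideals is spanned by a set of basis diagrams, and $I$ is spanned by all diagrams with fewer than $n$ propagating edges. It therefore suffices to show two things. First, every spanning diagram of each $K_S$ and each $L_i$ lies in $I$. Second, every basis diagram of $I$ lies in some $K_S$ or some $L_i$.

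We should also confirm that these sets really are left ideals. Left multiplication $d'd$ either vanishes or produces a scalar multiple of a diagram. When it is nonzero, the right-hand column of the result is obtained from that of $d$ only by splicing defects or by turning defects into isolated vertices. The second possibility can occur only if the propagating edge of $d$ meets an isolated vertex in the middle column, and that would be a floating edge, giving zero. So right-hand isolated vertices are preserved exactly, and existing non-propagating right edges are preserved. The lemma's statement presupposes this, and the paper has already called these sets left ideals, so this check can be brief.

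For the first inclusion, take a diagram in $K_S$ with $S$ non-empty. It has an isolated vertex in its right-hand column, so that vertex carries no propagating edge, and the diagram has at most $n-1$ propagating edges. A diagram in $L_i$ has a non-propagating edge between $\ol{i}$ and $\ol{i+1}$, so it has at most $n-2$ propagating edges. Hence all these spanning diagrams lie in $I$.

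For the reverse inclusion, let $d$ be a diagram with fewer than $n$ propagating edges, and split into two cases.

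\emph{Case 1: $d$ has some isolated vertex in its right-hand column.} Let $S$ be the non-empty set of such vertices. Then $d\in K_S$ by definition.

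\emph{Case 2: the right-hand column of $d$ has no isolated vertices.} Every right vertex is then either a defect or an endpoint of a non-propagating edge. Since there are fewer than $n$ propagating edges, at least one non-propagating edge exists in the right-hand column. Planarity then gives an adjacent pair joined by an edge. Choose a non-propagating edge $\ol{a}$--$\ol{b}$ with $a<b$ for which $b-a$ is minimal. Suppose $b-a>1$. Any vertex $\ol{c}$ with $a<c<b$ is not isolated, and it cannot be a defect, since the propagating edge would have to cross the arc $\ol{a}$--$\ol{b}$. So $\ol{c}$ is joined to some $\ol{c'}$, and by planarity $a<c'<b$. This gives a shorter edge, contradicting minimality. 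Hence $b=a+1$, and $d\in L_a$.

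The only step with content is this nesting argument in Case 2, and it uses exactly the planarity constraint recorded in the earlier remark about splicing. Everything else follows directly from the definitions.
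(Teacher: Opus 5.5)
Your proof is correct and takes the same route as the paper. Both check the two inclusions on basis diagrams, and both split the reverse inclusion by whether the right-hand column has an isolated vertex (landing in some $K_S$) or none (landing in some $L_i$). You also write out two things the paper leaves implicit: the left-ideal check, and the minimal-arc nesting argument behind the paper's phrase ``planarity ensures'' a consecutive non-propagating edge, where you correctly use the absence of isolated vertices to rule out gaps under the arc.
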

\begin{proof}
A basis diagram in any $K_S$ must have at least one isolated vertex. Therefore this diagram cannot have $n$ propagating edges and so $K_S\subset I$. A basis diagram of $L_i$ must have at least one non-propagating edge and therefore cannot have $n$ propagating edges. Therefore $L_i\subset I$.

Conversely, a basis element of $I$ must either have isolated vertices in the right-hand column or, if it has no isolated vertices in the right-hand column, it must have at least one non-propagating edge in the right-hand column. If the diagram contains isolated vertices in the right-hand column then it lies in some $K_S$. Now suppose that the basis diagram has no isolated vertices in the right-hand column but has at least one non-propagating edge. The planarity condition on diagrams ensures that the diagram has a non-propagating edge between two consecutive vertices in the right-hand column. 
\end{proof}

\begin{lem}
\label{zero-intersection-lem}
The following intersections of ideals are zero.
\begin{enumerate}
    \item\label{mix-int} An intersection involving an ideal of the form $K_S$ and an ideal of the form $L_i$ is zero.
    \item\label{K-int} Let $S$ and $T$ be two distinct, non-empty subsets of $\lbrace \ol{1},\dotsc , \ol{n}\rbrace$. The intersection $K_S\cap K_T$ is zero.
    \item\label{L-int} Let $U\subseteq \ul{n-1}$. We have
\[\bigcap_{i\in U} L_i= 0\]
if and only if $U$ contains consecutive elements of $\ul{n-1}$. 
\end{enumerate}
\end{lem}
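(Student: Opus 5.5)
All three parts reduce to the fact that each $K_S$ and $L_i$ is spanned by a subset of the diagram basis of $dTL_n(\delta)$. An intersection of such left ideals is therefore spanned by the basis diagrams lying in every member, so it suffices to show that no single diagram lies in all the ideals in question. For (\ref{L-int}) we must also exhibit a diagram when the intersection is claimed to be nonzero. Each condition below concerns only the right-hand column of a diagram: its isolated vertices and its non-propagating edges.

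For (\ref{mix-int}), a basis diagram of $K_S$ has the vertices of the non-empty set $S$ isolated in its right-hand column, while a basis diagram of $L_i$ has no isolated vertices there. No diagram satisfies both, so $K_S\cap L_i=0$, and hence so is any intersection containing both kinds of ideal. For (\ref{K-int}), the set of isolated right-hand vertices of a diagram is uniquely determined. A diagram in $K_S\cap K_T$ would therefore have this set equal to both $S$ and $T$, forcing $S=T$.

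For (\ref{L-int}), suppose first that $i,i+1\in U$. A diagram in $L_i\cap L_{i+1}$ would have $\ol{i+1}$ joined by non-propagating edges to both $\ol{i}$ and $\ol{i+2}$. This is impossible, since each vertex meets at most one edge, so the intersection is zero. Conversely, suppose $U$ contains no two consecutive elements. Then the edges $\{\ol{i},\ol{i+1}\}$ for $i\in U$ are pairwise disjoint and nested side by side, so they are planar. I would build a diagram $d$ whose right-hand column consists of exactly these edges, with the remaining $n-2\lvert U\rvert$ right-hand vertices joined by propagating edges to the topmost left-hand vertices in order. The remaining left-hand vertices are then paired consecutively by non-propagating edges, and if an odd number remains, the last one is also joined to the right-hand column.

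The delicate point in this construction is the parity of the left-hand column. The left column has $n$ vertices, and $n-2\lvert U\rvert$ of them carry defects, so the remaining $2\lvert U\rvert$ left-hand vertices form an even number. They can therefore be paired by nested consecutive caps below the propagating edges without crossings. The resulting $d$ has no isolated vertices in its right-hand column and contains every required edge, so $d\in\bigcap_{i\in U}L_i$ and the intersection is nonzero. (If $U=\emptyset$ the intersection is empty of conditions, which is the degenerate case; presumably $U$ is intended to be non-empty.) Checking this planarity and parity bookkeeping is the only step that needs any care; everything else follows from basis considerations.
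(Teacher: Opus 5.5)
Your proof is correct and follows the paper's argument. Parts (1) and (2) and the forward direction of (3) are the same as the paper's: bookkeeping on the isolated vertices in the right-hand column, and a vertex $\ol{i+1}$ that would have to meet two edges. The one difference is the witness for the converse of (3). The paper uses the symmetric diagram with caps $\{i,i+1\}$ and $\{\ol{i},\ol{i+1}\}$ for $i\in U$ and through-strands $j\to\ol{j}$ everywhere else, so planarity is immediate and no parity discussion is needed. Your parity discussion is unnecessary anyway in the dilute setting, since leftover left-hand vertices can simply be left isolated.
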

\begin{proof}
For Part \ref{mix-int}, a diagram in the intersection would need to have a non-zero number of isolated vertices in the right-hand column (since it lies in some $K_S$) but would also need to have no isolated vertices in the right-hand column (since it lies in some $L_i$). This is clearly a contradiction, so such an intersection is empty.      

For Part \ref{K-int}, the definitions of $K_S$ and $K_T$ dictate precisely which vertices in the right-hand column are isolated. If the two sets $S$ and $T$ are distinct, there can be no intersection.

For Part \ref{L-int}, on the one hand, if $U$ contains two consecutive elements of $\ul{n-1}$, say $j$ and $j+1$, then a diagram in the intersection must have a non-propagating edge between $\ol{j}$ and $\ol{j+1}$ and also a non-propagating edge between $\ol{j+1}$ and $\ol{j+2}$. In other words, such a diagram would have two edges incident to vertex $\ol{j+1}$ which contradicts the definition of a dilute Temperley--Lieb diagram.   

On the other hand, suppose $U$ contains no consecutive elements. Let $d$ be the diagram such that
\begin{itemize}
    \item for each $i\in U$, $d$ has a non-propagating edge from $i$ to $i+1$ and a non-propagating edge from $\ol{i}$ to $\ol{i+1}$ and
    \item for each vertex $j$ in the left-hand column which is not part of such a non-propagating edge, there is a propagating edge between $j$ and $\ol{j}$.
\end{itemize}
This lies in the intersection and so the intersection is non-zero.
\end{proof}

\begin{lem}
\label{K-idem-lem}
For each non-empty subset $S\subseteq \lbrace \ol{1},\dotsc , \ol{n}\rbrace$, the ideal $K_S$ is principal and generated by an idempotent. Hence each $K_S$ is projective as a left $dTL_n(\delta)$-module.
\end{lem}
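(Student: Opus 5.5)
The plan is to write down an explicit idempotent generator for $K_S$ and check directly that it works. We do this by hand rather than through Lemma \ref{idem-lem-2}, since $p$ may have no defects when $S$ is all of $\{\ol{1},\dotsc,\ol{n}\}$. For a non-empty $S\subseteq\{\ol{1},\dotsc,\ol{n}\}$, let $e_S$ be the dilute Temperley--Lieb $n$-diagram in which, for each $\ol{j}\in S$, both $j$ and $\ol{j}$ are isolated, and, for each $\ol{j}\notin S$, there is a propagating edge from $j$ to $\ol{j}$. This diagram is planar, and its right-hand isolated vertices are exactly $S$, so $e_S\in K_S$. When $S$ is everything, $e_S$ is the empty diagram.

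The key step is to show $ye_S=y$ for every basis diagram $y\in K_S$; this is essentially Lemma \ref{dTL-idempotent-lem} in the degenerate situation where $e_S$ is the "straight" diagram. We examine the double diagram $y\ast e_S$ vertex by vertex in the middle column.
\begin{itemize}
\item For $\ol{j}\in S$, the middle vertex $j'$ is isolated in $y$ (because $y\in K_S$) and isolated in $e_S$, so it contributes nothing.
\item For $\ol{j}\notin S$, the vertex $\ol{j}$ of $y$ carries exactly one edge, which is either propagating or non-propagating. This edge is extended through $j'$ by the straight edge $j'\to\ol{j}$ of $e_S$.
\end{itemize}
Hence every connection in $y$ persists in $y\ast e_S$ with the same endpoints relabelled. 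No edge of $e_S$ is left dangling: each of its edges meets an edge of $y$ at the middle column, and that edge of $y$ reaches the left- or right-hand column. The left-hand column of $e_S$ has no non-propagating edges, so no loops or middle-column edges can form. Therefore there are no floating edges, no loops and no factor of $\delta$, and $ye_S=y$.

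It follows that $K_S=dTL_n(\delta)e_S$. The inclusion $\supseteq$ holds because $e_S\in K_S$ and $K_S$ is a left ideal. For $\subseteq$, each basis element $y$ of $K_S$ satisfies $y=ye_S$, and this extends $k$-linearly. Taking $y=e_S$ gives $e_S^2=e_S$. Projectivity is then standard: $dTL_n(\delta)=dTL_n(\delta)e_S\oplus dTL_n(\delta)(1-e_S)$, so $K_S$ is a direct summand of a free module.

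The only point needing care is whether $K_S$ really is a left ideal, that is, whether left multiplication preserves the set of isolated right-hand vertices. The paper takes this as part of the definition, but I would check it. Take $xy\neq0$. A right-hand vertex isolated in $y$ stays isolated in $xy$. A right-hand vertex of $y$ joined to the middle column by a propagating edge must continue through $x$ to an outer column, since otherwise there would be a floating edge and the product would vanish. So the isolated set is preserved.
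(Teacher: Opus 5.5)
Your proof is correct, and it takes a more direct route than the paper.

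\emph{The paper's route.} It splits into two cases. For $S=\{\ol{1},\dotsc,\ol{n}\}$ it uses the empty diagram. For a proper subset $S$ it identifies $K_S$ with $J_{q_1}$, where $q_1$ has isolated vertices at $S$ and defects elsewhere. It then obtains some $e_{q_1}$ from Lemma \ref{idem-lem-2}, which imports \cite[Lemma 8.12]{Boyde}. Lemma \ref{dTL-idempotent-lem} shows that right multiplication by $e_{q_1}$ is a retraction onto $J_{q_1}$, and \cite[Lemma 2.5]{Boyde2} turns this into a principal idempotent generator.

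\emph{Your route.} You write down the generator explicitly: the straight diagram $e_S$, which is visibly one of the summands in the paper's description of the identity element. You verify $ye_S=y$ by hand. This treats every $S$ uniformly, since the empty diagram is just $e_S$ for the full $S$. It also yields idempotency and $K_S=dTL_n(\delta)e_S$ without any retraction criterion.

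\emph{How the two relate.} Your $e_S$ satisfies Conditions \ref{C1}--\ref{C4} of Lemma \ref{dTL-idempotent-lem} for $p=q_1$. Condition \ref{C4} is vacuous because neither $q_1$ nor the left column of $e_S$ has any arcs. So $e_S$ is a legitimate choice of $e_{q_1}$, and your computation is exactly that lemma specialised to a link state with no arcs. This shows the heavier machinery is only genuinely needed for the intersections $\bigcap_{i\in U}L_i$ in Lemmas \ref{odd-lem} and \ref{even-lem}. There the link states contain arcs and no such obvious generator is available.

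\emph{What each buys.} The paper's approach is uniform with those later lemmas. Yours is self-contained and exhibits the generator concretely. Your closing check that left multiplication preserves the set of isolated right-hand vertices also fills in a point the paper leaves implicit.
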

\begin{proof}
Suppose $S=\lbrace \ol{1},\dotsc , \ol{n}\rbrace$. Let $q$ be the right link state consisting of $n$ isolated vertices. Then $K_S=J_q$, the left ideal spanned $k$-linearly by all dilute Temperley--Lieb $n$-diagrams whose right link state is $q$. This ideal is generated by the dilute Temperley--Lieb $n$-diagram with no edges whatsoever and one observes that this diagram is idempotent.  

Now suppose that $S$ is a non-empty strict subset of $\lbrace \ol{1},\dotsc , \ol{n}\rbrace$. Let $q_1$ denote the right link state such that the vertices labelled by elements of $S$ are isolated and vertices labelled by elements the complement of $S$ have defects. 

We have $K_S=J_{q_1}$, the left ideal spanned $k$-linearly by all dilute Temperley--Lieb $n$-diagrams whose right link state can be obtained from $q_1$ by a valid sequence of splices. Since $S$ is non-empty and $S\neq \lbrace \ol{1},\dotsc , \ol{n}\rbrace$, the right link state $q_1$ contains at least one defect. By combining Lemmas \ref{dTL-idempotent-lem} and \ref{idem-lem-2}, there exists an element $e_{q_1}$ such that right multiplication by $e_{q_1}$ gives a retraction $dTL_n(\delta)\rightarrow J_{q_1}$. Therefore, by \cite[Lemma 2.5]{Boyde2}, $K_S=J_{q_1}$ is principal and generated by an idempotent. 
\end{proof}

\begin{lem}
\label{odd-lem}
Let $n$ be odd. Let $U\subset \ul{n-1}$ be a subset containing no consecutive elements. Then
\[\bigcap_{i\in U} L_i\]
is principal and generated by an idempotent. Hence each such intersection is projective as a left $dTL_n(\delta)$-module.
\end{lem}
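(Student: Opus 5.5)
Write $U=\{i_1<\dots<i_r\}$ (the case $U=\emptyset$ does not occur, since $C_p$ only uses $|S|\geqslant 1$). The plan is to identify $\bigcap_{i\in U} L_i$ with an ideal $J_p$ for a suitable right link state $p$, and then apply Lemmas \ref{dTL-idempotent-lem} and \ref{idem-lem-2} together with \cite[Lemma 2.5]{Boyde2}, exactly as in the proof of Lemma \ref{K-idem-lem}.

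Let $p$ be the right link state with a non-propagating edge joining $\ol{i}$ and $\ol{i+1}$ for each $i\in U$, and a defect at every other vertex. Because $U$ has no consecutive elements, these edges are pairwise disjoint, so $p$ is a valid link state, and it has no isolated vertices.

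First I will check that $\bigcap_{i\in U}L_i = J_p$. A basis diagram lies in the intersection exactly when its right-hand column has no isolated vertices and contains each edge $\ol{i}$--$\ol{i+1}$ for $i\in U$. Every other right-hand vertex is then either a defect or the endpoint of some other non-propagating edge, and those other edges must avoid the fixed edges. Such an edge $\ol{a}$--$\ol{b}$ cannot straddle only one endpoint of a fixed edge without crossing it. So $\ol{a}$--$\ol{b}$ either nests the fixed edge or is disjoint from it. In either case it can be obtained from $p$ by splices: a planar matching of the defects of $p$ can be built by iterated splices of adjacent defects, since the vertices strictly between two adjacent defects are fully matched by edges of $p$ or by earlier splices. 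Conversely, splicing never creates isolated vertices and never removes the edges of $p$. Both inclusions follow because the two ideals are spanned by subsets of the same diagram basis.

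The key point is that $p$ has at least one defect, and this is where oddness of $n$ enters. The edges of $p$ cover exactly $2|U|$ vertices, which is even, and $n$ is odd, so at least one vertex of $p$ carries a defect. Lemma \ref{idem-lem-2} therefore provides $e_p$ satisfying the conditions of Lemma \ref{dTL-idempotent-lem}. Moreover $e_p\in J_p$, since its right link state is $p$ by Condition \ref{C1}. Hence $ye_p=y$ for all $y\in J_p$, right multiplication by $e_p$ is a retraction $dTL_n(\delta)\to J_p$, and \cite[Lemma 2.5]{Boyde2} shows that $J_p$ is principal and generated by the idempotent $e_p$. Projectivity follows, because $J_p$ is then a direct summand of $dTL_n(\delta)$.

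The main obstacle I expect is the splicing description in the identification $\bigcap_{i\in U} L_i=J_p$, in particular confirming that every admissible planar completion is reachable by splices. I would handle it by induction on the number of non-propagating edges not present in $p$, each time splicing an innermost such edge, whose interior contains only edges of $p$ and so no defects.
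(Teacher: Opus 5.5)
Your proposal is correct and is essentially the paper's proof. The paper uses the same link state $q_2$ (edges $\ol{i}$--$\ol{i+1}$ for $i\in U$, defects elsewhere), identifies $\bigcap_{i\in U}L_i$ with $J_{q_2}$, uses oddness of $n$ to get a defect, and concludes via Lemmas \ref{dTL-idempotent-lem} and \ref{idem-lem-2} together with \cite[Lemma 2.5]{Boyde2}. You also spell out the splice argument and the parity count, which the paper only asserts. One small fix: state your innermost-edge induction for an arbitrary base link state $q\supseteq p$, always splicing an edge that is innermost relative to $q$, because after the first splice the interior of the next edge need not consist only of edges of $p$.
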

\begin{proof}
Let $q_2$ denote the right link state such that there is a non-propagating edge between $\ol{i}$ and $\ol{i+1}$ for each $i\in U$ and defects at all other vertices.

We have 
\[\bigcap_{i\in U} L_i=J_{q_2},\]
the left ideal spanned $k$-linearly by all dilute Temperley--Lieb $n$-diagrams whose right link state can be obtained from $q_2$ by a valid sequence of splices. Since $n$ is odd we must have at least one defect. 

By combining Lemmas \ref{dTL-idempotent-lem} and \ref{idem-lem-2}, there exists an element $e_{q_2}$ such that right multiplication by $e_{q_2}$ gives a retraction $dTL_n(\delta)\rightarrow J_{q_2}$. Therefore, by \cite[Lemma 2.5]{Boyde2}, $\bigcap_{i\in U} L_i=J_{q_2}$ is principal and generated by an idempotent. 
\end{proof}

\begin{lem}
\label{even-lem}
Let $n$ be even. Let $U\subset \ul{n-1}$ be a subset containing no consecutive elements such that $U\neq \lbrace 1,3,\dotsc , n-1\rbrace$. Then
\[\bigcap_{i\in U} L_i\]
is principal and generated by an idempotent. Hence each such intersection is projective as a left $dTL_n(\delta)$-module.   
\end{lem}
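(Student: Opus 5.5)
I will follow the same strategy as in the proof of Lemma \ref{odd-lem}. Let $q_2$ be the right link state with a non-propagating edge between $\ol{i}$ and $\ol{i+1}$ for each $i\in U$ and defects at every other vertex. The first step is to check that
\[\bigcap_{i\in U} L_i=J_{q_2}.\]
A diagram lies in every $L_i$ with $i\in U$ exactly when it has no isolated vertices in the right-hand column and it has the edges $\ol{i}$--$\ol{i+1}$ for all $i\in U$. Its remaining right-hand vertices are then either defects or joined by non-propagating edges. By planarity, such a right link state is obtained from $q_2$ by a sequence of splices: splicing only ever joins defects, and the edges already fixed by $U$ cannot be crossed. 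Conversely, splicing never creates isolated vertices and never removes the edges indexed by $U$, so $J_{q_2}$ is contained in the intersection. This identification works word for word as in the odd case and does not use parity.

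The only point where parity matters is showing that $q_2$ has at least one defect. The edges of $q_2$ use exactly $2\llv U\rrv$ vertices, so the number of defects is $n-2\llv U\rrv$. Thus $q_2$ has no defect if and only if $2\llv U\rrv = n$. A subset of $\ul{n-1}$ with no consecutive elements and of size $n/2$ must be exactly $\lbrace 1,3,\dotsc,n-1\rbrace$. To see this, the pairs $\lbrace i,i+1\rbrace$ for $i\in U$ are disjoint and would have to tile all of $\ul{n}$. The pair containing $1$ must therefore be $\lbrace 1,2\rbrace$, and inductively the pairs are $\lbrace 1,2\rbrace,\lbrace 3,4\rbrace,\dotsc$. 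Since we assume $U\neq\lbrace 1,3,\dotsc,n-1\rbrace$, it follows that $2\llv U\rrv<n$, so $q_2$ has at least one defect. I expect this counting and tiling step to be the only genuinely new part of the argument, and it is elementary.

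With a defect available, Lemma \ref{idem-lem-2} supplies a diagram $e_{q_2}$ satisfying the conditions of Lemma \ref{dTL-idempotent-lem}. That lemma gives $ye_{q_2}=y$ for all $y\in J_{q_2}$. Since $e_{q_2}$ has right link state $q_2$, right multiplication by $e_{q_2}$ sends $dTL_n(\delta)$ into $J_{q_2}$, because products only splice or zero out. It therefore defines a retraction $dTL_n(\delta)\to J_{q_2}$. By \cite[Lemma 2.5]{Boyde2}, $J_{q_2}$ is principal and generated by an idempotent. Being a direct summand of $dTL_n(\delta)$, it is then projective as a left module.
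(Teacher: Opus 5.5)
Your proposal is correct and is essentially the paper's argument. The paper simply says the proof of Lemma \ref{odd-lem} carries over because $q_2$ still has a defect, and you reproduce that proof: the identification $\bigcap_{i\in U}L_i=J_{q_2}$, then Lemmas \ref{idem-lem-2} and \ref{dTL-idempotent-lem}, then \cite[Lemma 2.5]{Boyde2}. Your count of $n-2\lvert U\rvert$ defects, together with the tiling argument showing that $\{1,3,\dotsc,n-1\}$ is the only subset of size $n/2$ with no consecutive elements, makes explicit the step the paper leaves to the reader.
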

\begin{proof}
The same proof as Lemma \ref{odd-lem} applies here since we must have at least one defect.  
\end{proof}

The following is the analogue of the cup module appearing in \cite[Theorem B]{Sroka}.

\begin{defn}
\label{cup-mod-defn}
Let $n$ be even. Let $\cupmod(n)$ denote the left $dTL_n(\delta)$-module spanned by all dilute Temperley--Lieb $n$-diagrams with a non-propagating edge between $\ol{i}$ and $\ol{i+1}$ for each $i\in \lbrace 1,3,\dotsc, n-1\rbrace$.
\end{defn}

It is immediate for $n$ even and $U=\llb 1,3,\dotsc, n-1\rrb$ that there is an isomorphism of left $dTL_n(\delta)$-modules
\[\cupmod(n) \cong \bigcap_{i\in U} L_i.\]

However, the next lemma shows that $\cupmod(n)$ is also isomorphic to another of our ideals of the form $K_S$ and is therefore projective. This can be seen as the main reason why the (co)homology of the dilute Temperley--Lieb algebras is different from that of the classical Temperley--Lieb algebras; the cup module appearing in \cite[Theorem B]{Sroka} is not projective in general, as can be seen from \cite[Theorem C]{BH1} or the results of \cite{BBRWS,Boyde-planar}.

\begin{lem}
\label{cup-proj-lem}
Let $n$ be even and let $S = \lbrace \ol{1},\dotsc , \ol{n}\rbrace$. There is an isomorphism of left $dTL_n(\delta)$-modules
\[K_S\cong \cupmod(n).\]
In particular, $\cupmod(n)$ is projective as a left $dTL_n(\delta)$-module and  \[\1\otimes_{dTL_n(\delta)} \cupmod(n)= \Hom_{dTL_n(\delta)}(\cupmod(n),\1)=0.\] 
\end{lem}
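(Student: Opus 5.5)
The plan is to build the isomorphism by right multiplication by a single fixed diagram. That makes it automatically a map of left $dTL_n(\delta)$-modules, and it then only remains to check that it is a bijection on $k$-bases.

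First I will describe bases of both sides by left link states. Take $S=\lbrace \ol{1},\dotsc,\ol{n}\rbrace$. A basis diagram of $K_S$ has every right-hand vertex isolated, so it has no propagating edges. Hence its left link state has no defects, consisting only of non-propagating edges and isolated vertices. Conversely, every defect-free left link state arises from exactly one such diagram. So the basis of $K_S$ is in bijection with the set $\mathcal{L}$ of defect-free left link states.

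In a basis diagram of $\cupmod(n)$, every right-hand vertex lies on one of the fixed edges $\ol{i}$--$\ol{i+1}$ for $i$ odd. Since $n$ is even, these edges use up the whole right column, so there are again no propagating edges. The left link state is then an arbitrary element of $\mathcal{L}$, and the right link state is forced to be the fixed cup state. Thus the basis of $\cupmod(n)$ is also in bijection with $\mathcal{L}$.

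Next, let $c$ be the dilute Temperley--Lieb $n$-diagram whose left column consists of $n$ isolated vertices and whose right column has edges $\ol{i}$--$\ol{i+1}$ for $i=1,3,\dotsc,n-1$. Define $\phi\colon K_S\to \cupmod(n)$ by $\phi(x)=xc$; this is left $dTL_n(\delta)$-linear by associativity. For a basis diagram $x\in K_S$, the middle column of the double diagram $x\ast c$ consists entirely of isolated vertices, since they are isolated both in the right column of $x$ and in the left column of $c$. Consequently no edge touches the middle column, so there are no floating edges, no loops and no factor of $\delta$. The product $xc$ is therefore exactly the diagram with the left link state of $x$ and the cup right link state. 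So $\phi$ sends the basis element indexed by $\ell\in\mathcal{L}$ to the basis element of $\cupmod(n)$ indexed by $\ell$. It is a bijection on bases, hence an isomorphism of $k$-modules, hence an isomorphism of left modules.

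I would not try to invert $\phi$ by multiplying by the mirror diagram of $c$, since that would create $n/2$ loops and introduce $\delta^{n/2}$. The bijection-on-bases argument avoids this, and checking the multiplication rule in $x\ast c$ is the one point needing care.

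Finally, projectivity follows from Lemma \ref{K-idem-lem}, since $K_S$ is generated by the idempotent empty diagram. That diagram has fewer than $n$ propagating edges, so it lies in $I$ and the augmentation $\varepsilon$ sends it to $0$. Thus $K_S$ is a left ideal generated by an idempotent that acts by zero on $\1$, viewed both as a right and as a left module. Lemma \ref{Boyde-lem-2} then gives $\1\otimes_{dTL_n(\delta)}K_S=0$ and $\Hom_{dTL_n(\delta)}(K_S,\1)=0$, and these transfer to $\cupmod(n)$ through the isomorphism $\phi$.
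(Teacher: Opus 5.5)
Your proposal is correct and follows essentially the paper's proof: the isomorphism is right multiplication by the same diagram ($d_{cup}$ in the paper, your $c$), and projectivity and the vanishing statements are deduced from Lemmas \ref{K-idem-lem} and \ref{Boyde-lem-2}. The only difference is cosmetic. The paper writes down the inverse $d\mapsto d_l$ explicitly, whereas you check that right multiplication by $c$ is a bijection on bases, which makes left-linearity of the inverse automatic.
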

\begin{proof}
We begin by noting that basis diagrams in both $K_S$ and $\cupmod(n)$ can have no propagating edges.

Let $d_{cup}$ be the dilute Temperley--Lieb $n$-diagram whose left link state consists solely of isolated vertices and whose right link state has a non-propagating edge between $\ol{i}$ and $\ol{i+1}$ for each $i\in \lbrace 1,3,5,\dotsc , n-1\rbrace$. 

Right multiplication by $d_{cup}$ gives a map of left $dTL_n(\delta)$-modules $K_S\rightarrow \cupmod(n)$.

Now consider a basis diagram, $d$, in $\cupmod(n)$. This can be written as a product $d=d_l d_r$: we take $d_l$ to be the dilute Temperley--Lieb $n$-diagram whose left link state is the left link state of $d$ and whose right link state consists solely of isolated vertices; we take $d_r$ to be the dilute Temperley--Lieb $n$-diagram whose left link state consists solely of isolated vertices and whose right link state has a non-propagating edge between $\ol{i}$ and $\ol{i+1}$ for each $i\in \lbrace 1,3,5,\dotsc , n-1\rbrace$. 

We note that $d_l$ is a basis diagram in $K_S$ and therefore the assignment $d\mapsto d_{l}$ determines a map of left $dTL_n(\delta)$-modules $\cupmod(n)\rightarrow K_S$.

These maps are mutually inverse, giving the required isomorphism. The remainder of the statement now follows from Lemmas \ref{K-idem-lem} and \ref{Boyde-lem-2}.   
\end{proof}

We can now prove Theorem \ref{main-thm}.

\begin{proof}[Proof of Theorem \ref{main-thm}]
Recall that $I\subset dTL_n(\delta)$ is the two-sided ideal spanned by all dilute Temperley--Lieb $n$-diagrams having fewer than $n$ propagating edges.

We consider the Mayer--Vietoris complex, $C_{\star}$, of the cover of $I$ provided by Lemma \ref{cover-lem}, which consists of $K_S$ for all non-empty subsets $S$ of $\lbrace \ol{1},\dotsc , \ol{n}\rbrace$ and $L_i$ for $1\leqslant i \leqslant n-1$. We split into three cases: $n$ odd, $n=2$ and $n$ even with $n>2$. By Lemma \ref{zero-intersection-lem} any intersection of the form $K_S\cap L_i$ is zero, as is any intersection of the form $K_S \cap K_T$ for distinct subsets $S$ and $T$ of $\lbrace \ol{1},\dotsc , \ol{n}\rbrace$, as is any intersection of the form $\cap_{i\in U} L_i$ if and only if $U\subset \ul{n-1}$ contains consecutive elements.  

Therefore, the Mayer--Vietoris can take the following forms. Let $1\leqslant x\leqslant \lfloor\frac{n}{2}\rfloor -1$. To ease notation, let
\[\mathbf{K}_{n} = \bigoplus_{S} K_S  \quad \text{and} \quad \mathbf{L}_x= \underset{\lvert U\rvert=x}{\bigoplus_{U\subset \ul{n-1}}} \bigcap_{i\in U} L_i\]
where each $S$ is a non-empty subset of $\lbrace \ol{1},\dotsc , \ol{n}\rbrace$, and each set $U$ contains no consecutive elements of $\ul{n-1}$.

When $n=2$ we have
\[0\leftarrow \1 \leftarrow dTL_2(\delta)\leftarrow \mathbf{K}_2\oplus \cupmod(2)\leftarrow 0,\]
with $\1$ in degree $-1$ and $\mathbf{K}_2\oplus \cupmod(2)$ appearing in degree $1$.

For $n>2$ even we have
\[0\leftarrow \1 \leftarrow dTL_n(\delta)\leftarrow \mathbf{K}_n\oplus \mathbf{L}_1\leftarrow \mathbf{L}_2\leftarrow \cdots \leftarrow \mathbf{L}_{\frac{n}{2}-1} \leftarrow  \cupmod(n)\leftarrow 0,\]
with $\1$ appearing in degree $-1$ and $\cupmod(n)$ appearing in degree $\frac{n}{2}$.

For $n$ odd, we have
\[0\leftarrow \1 \leftarrow dTL_n(\delta)\leftarrow \mathbf{K}_n \oplus \mathbf{L}_1\leftarrow \mathbf{L}_2 \leftarrow\cdots \leftarrow \mathbf{L}_{\lfloor \frac{n}{2}\rfloor -1 } \leftarrow 0\]
with $\1$ appearing in degree $-1$ and $\mathbf{L}_{\lfloor \frac{n}{2}\rfloor -1 }$ appearing in degree $\lfloor\frac{n}{2}\rfloor -1$.

In all cases, Lemma \ref{Boyde-lem} tells us that the Mayer--Vietoris complex is acyclic. Lemmas \ref{K-idem-lem}, \ref{odd-lem}, \ref{even-lem} and \ref{cup-proj-lem} tell us that each ideal of the form $K_S$, $\cap_{i\in U} L_i$ or $\cupmod(n)$ is projective as a left $dTL_n(\delta)$-module. Hence each term of the Mayer--Vietoris above degree $-1$ is a projective left $dTL_n(\delta)$-module.

In other words, in each case, the complex $C_{\geqslant 0}$ is a projective resolution of $\1$ by left $dTL_n(\delta)$-modules. 

Therefore, we have
\[\Tor_{\star}^{dTL_n(\delta)}(\1,\1)= H_{\star}(\1 \otimes_{dTL_n(\delta)} C_{\geqslant 0}) \quad \text{and} \quad \Ext_{dTL_n(\delta)}^{\star}(\1,\1)=H^{\star}(\Hom_{dTL_n(\delta)}(C_{\geqslant 0},\1)).\]

However, by the above lemmas together with Lemma \ref{Boyde-lem-2}, we have 
\begin{itemize}
\item $\1\otimes_{dTL_n(\delta)} C_0=\1\otimes_{dTL_n(\delta)}dTL_n(\delta)\cong k$;
\item $\1\otimes_{dTL_n(\delta)} C_{p}=0$ for $p>0$;
\item $\Hom_{dTL_n(\delta)}(C_0,\1)= \Hom_{dTL_n(\delta)}(dTL_n(\delta) ,\1)\cong k$ and
\item $\Hom_{dTL_n(\delta)}(C_{p},\1)=0$ for $p>0$
\end{itemize}
from which we deduce the result.   
\end{proof}

\bibliographystyle{alpha}
\bibliography{dTL-refs}

\end{document}